\documentclass[11pt]{article}
\usepackage{amsmath, amssymb, amsthm}
\usepackage{verbatim}
\usepackage{multicol}
\usepackage{enumerate}
\usepackage{comment}
\usepackage{dsfont}
\usepackage[none]{hyphenat}
\usepackage[unicode]{hyperref}
\hypersetup{
	colorlinks=true,
	linkcolor=blue,
	filecolor=magenta,      
	urlcolor=cyan,
	citecolor=blue
}
\usepackage{pgf}
\usepackage{tikz}
\usetikzlibrary{positioning,arrows,shapes,decorations.markings,decorations.pathreplacing,matrix,patterns}
\tikzstyle{vertex}=[circle,draw=black,fill=black,inner sep=0,minimum size=3pt,text=white,font=\footnotesize]
\usepackage{cleveref}

\date{}
\title{\vspace{-1.2cm} Note on the second eigenvalue of regular graphs}

\author{Igor Balla \thanks{Einstein Institute of Mathematics, Hebrew University of Jerusalem, Israel \emph{e-mail}: \textbf{iballa1990@gmail.com}. Research supported by SNSF Project 184522.}, Eero R\"aty\thanks{Ume\r{a} University, \emph{e-mail}: \textbf{\{eero.raty,istvan.tomon\}@umu.se}. ER is supported by a postdoctoral grant 213-0204 from the Olle Engkvist Foundation.}, Benny Sudakov\thanks{ETH Zurich, \emph{e-mail}: \textbf{benjamin.sudakov@math.ethz.ch}.
Research supported in part by SNSF grant 200021\_196965.}, 
	Istv\'an Tomon\footnotemark[2]
}

\oddsidemargin  0pt
\evensidemargin 0pt
\marginparwidth 20pt
\marginparsep 10pt
\topmargin 0pt
\headsep 0pt
\textheight 9.0in
\textwidth 6.6in

\theoremstyle{plain}
\newtheorem{theorem}{Theorem}[section]

\newtheorem{lemma}[theorem]{Lemma}

\Crefname{theorem}{Theorem}{Theorems}
\Crefname{definition}{Definition}{Definitions}
\Crefname{corollary}{Corollary}{Corollaries}
\Crefname{claim}{Claim}{Claims}
\Crefname{lemma}{Lemma}{Lemmas}
\Crefname{conjecture}{Conjecture}{Conjectures}
\Crefname{problem}{Problem}{Problems}
\Crefname{prop}{Proposition}{Propositions}

\theoremstyle{definition}

\begin{document}

\maketitle
\sloppy

\begin{abstract}
The goal of this expository note is to give a short, self-contained proof of nearly optimal lower bounds for the second largest eigenvalue of the adjacency matrix of regular graphs. This is a companion note to \cite{RST}.
\end{abstract}

\section{Introduction}
The Alon-Boppana theorem \cite{alon-boppana} is a fundamental result in spectral graph theory, which  states that if $G$ is an $n$-vertex  $d$-regular graph of diameter $D$, then the second largest eigenvalue $\lambda_2$ of the adjacency matrix is at least $2\sqrt{d-1}\cdot(1-o_D(1))$.  However, in case $D\in \{2,3\}$, which can happen for any $d\gg n^{1/3}$, the Alon-Boppana bound becomes meaningless. 

In a recent paper \cite{RST}, we addressed the growth rate of $\lambda_2$ for all values of $1\leq d\leq (1/2-\varepsilon)n$ by relating it to the so called \emph{positive discrepancy} or \emph{minimum bisection}, getting close to optimal bounds. The goal of this short note is to give a self-contained proof of these bounds. We believe that the behaviour of the smallest $\lambda_2$ among $d$-regular graphs is fairly surprising, so these results deserve their own exposition.

\begin{theorem}\label{thm:main}
Let $G$ be a $d$-regular $n$ vertex graph, and let $\lambda_2$ be the second largest eigenvalue of the adjacency matrix of $G$. Then for every $\varepsilon>0$, 
$$\lambda_2\geq \begin{cases}
\frac{\sqrt{d}}{2}(1-o(1)) &\mbox{ if } 1\leq d\leq n^{2/3}\\
\frac{n}{2d}-1 &\mbox{ if } n^{2/3}< d\leq n^{3/4}\\
(\varepsilon/4)^{1/3}\cdot d^{1/3}-1 &\mbox{ if } n^{3/4}< d\leq (1/2-\varepsilon)n,
\end{cases}$$
where the $o(1)$-term tends to 0 as $n\rightarrow \infty$. 
\end{theorem}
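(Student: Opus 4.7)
The approach is a two-step reduction: first, link $\lambda_2$ to a \emph{one-sided} combinatorial quantity via the Courant-Fischer characterisation, and second, in each regime of $d$, construct an explicit subset witnessing the required positive discrepancy.

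\textbf{Step 1 (Rayleigh reduction).} For any $S \subseteq V(G)$ with $|S| = s$, the vector $\m{1}_S - (s/n)\m{1}$ is perpendicular to the Perron eigenvector $\m{1}$ of the $d$-regular adjacency matrix $A$. A direct computation of the Rayleigh quotient gives
\[
\lambda_2 \;\geq\; \frac{2e(S) - ds^2/n}{s(1 - s/n)}.
\]
So, in each regime, the task reduces to exhibiting a set $S$ whose positive discrepancy $\pdisc(S) := e(S) - ds^2/(2n)$ is sufficiently large relative to $s(1-s/n)$.

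\textbf{Step 2 (Discrepancy per regime).} The construction of $S$ differs per range of $d$. For $d \leq n^{2/3}$, I would run an Alon-Boppana-flavoured walk/trace argument using $\mathrm{tr}(A^2)=nd$ together with a Cauchy-Schwarz estimate on $\mathrm{tr}(A^4)$, in order to locate a vertex $v$ whose neighborhood $N(v)$ carries $\Omega(d^{3/2})$ edges; this yields $\pdisc(N(v)) \gtrsim d^{3/2}/2$ and hence $\lambda_2 \geq (\sqrt{d}/2)(1-o(1))$. For $n^{2/3} < d \leq n^{3/4}$, I would exploit the identity $\sum_{u,v}|N(u)\cap N(v)| = nd^2$: a pigeonhole over pairs with many common neighbours produces a set of size about $d$ carrying $\Omega(n)$ internal edges, yielding $\pdisc(S) \gtrsim n/4$ and, via Step 1 with $s=d$, the bound $\lambda_2 \geq n/(2d) - 1$. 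For $n^{3/4} < d \leq (1/2-\varepsilon)n$, I would take $S$ as the larger side of a greedy balanced partition, refined by a density-increment iteration; the $\varepsilon$-gap from $d=n/2$ supplies enough slack in the average edge count to force $\pdisc(S) \gtrsim \varepsilon^{1/3}d^{1/3}n$ with $|S| = \lfloor n/2 \rfloor$, and plugging into Step 1 yields $\lambda_2 \geq (\varepsilon/4)^{1/3}d^{1/3} - 1$.

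\textbf{Main obstacle.} A pure trace estimate such as $\sum_{i\geq 2}\lambda_i^2 = nd - d^2$ controls only $\max(|\lambda_2|,|\lambda_n|)$, so the essential feature of positive discrepancy is that it is \emph{one-sided} and thus separates $\lambda_2$ from $\lambda_n$. The most delicate step is the dense regime: the exponent $d^{1/3}$ emerges only after optimising the size of $S$ against the $\varepsilon$-slack, and a naive random bipartition would lose $\sqrt{\log n}$ factors, so an iterative density-increment scheme seems to be the cleanest route. Casting the small-$d$ and intermediate-$d$ arguments in the same positive-discrepancy language should then let all three cases be combined into a single, short exposition.
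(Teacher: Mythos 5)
Your Step 1 (the Rayleigh-quotient inequality $\lambda_2 \geq \frac{2e(S)-ds^2/n}{s(1-s/n)}$) is correct, and it is indeed the link to positive discrepancy that underlies the companion paper \cite{RST}. However, the present note deliberately avoids the discrepancy detour and argues purely spectrally: the first two regimes follow from the trace identities $\mathrm{tr}(A)=0$, $\mathrm{tr}(A^2)=dn$, $\mathrm{tr}(A^3)\geq 0$ combined with the inequality $(\sum x_i)(\sum x_i^3)\geq(\sum x_i^2)^2$ applied to $|\lambda_i|$ over the negative part of the spectrum, and the third regime is obtained by applying the Schur product theorem to the positive semidefinite matrix $M=(1-\alpha)I-\beta A+\alpha J$ with $\alpha=\frac{p}{\lambda_2+p}$, $\beta=\frac{1}{\lambda_2+p}$, together with a short lemma that $(1+\lambda_2)|\lambda_n|\geq d/4$. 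No vertex subset is ever exhibited.

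More importantly, your Step 2 has a concrete gap in the first regime. You propose to find a vertex $v$ with $e(N(v))=\Omega(d^{3/2})$, but $e(N(v))$ is exactly the number of triangles through $v$, and in any triangle-free $d$-regular graph (e.g.\ a $d$-regular bipartite graph, or the incidence graph of a projective plane when $d\approx\sqrt{n}$) this is identically zero. For such graphs $\pdisc(N(v))=-d^3/(2n)<0$ for every $v$, so the set $S=N(v)$ cannot possibly certify $\lambda_2\geq\sqrt{d}/2$. A trace/Cauchy--Schwarz argument on $\mathrm{tr}(A^4)$ counts closed $4$-walks and will not rescue a claim about triangles. Any discrepancy-based proof here must produce a set $S$ that is not simply a neighbourhood, and in fact the essential difficulty (which \cite{RST} handles via semidefinite/trace methods rather than explicit constructions) is that $\lambda_2$ can be large while no local structure such as a dense neighbourhood exists. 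The middle- and high-degree steps are similarly under-specified: the pigeonhole on codegrees gives a pair $u,v$ with $|N(u)\cap N(v)|\gtrsim d^2/n$, but that alone does not produce a size-$d$ set with $\Omega(n)$ internal edges, and the ``density-increment iteration'' in the dense regime is an idea, not an argument. As written, the proposal establishes the reduction but not the bound; the core combinatorial work remains entirely to be done, and for at least the first regime the construction you suggest provably fails.
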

We did not put much effort into optimizing the constant terms. However, slightly more careful calculations yield that $\lambda_2\geq \sqrt{d/2}(1-o(1))$ if $d=o(n^{2/3})$, $\lambda_2\geq \frac{n}{d}(1-o(1))$ in case $d=o(n)$ and $n^{2/3}=o(d)$, and $\lambda_2\geq d^{1/3}\cdot (1-o(1))$ if $d=o(n)$.

In \cite{RST}, we discuss upper bounds as well (see also \cite{Balla21} for relations to constructions of \emph{equiangular lines}). Up to a constant factor, the random $d$-regular graph shows that the first inequality in Theorem \ref{thm:main} is tight. Our bounds are also tight if $d=\Theta(n^{3/4})$ or $d=\Theta(n)$. Finally, by relaxing the condition of being regular to almost regular (i.e.\ every degree is $d(1+o_n(1))$), the second inequality is also tight. Therefore, we conjecture that our Theorem \ref{thm:main} is tight up to constant factors for every $1\leq d\leq (1/2-\varepsilon)n$.
 Finally, the complete bipartite graph shows that $\lambda_2\geq 0$ cannot be improved if $d=n/2$.

We highlight that the first two inequalities are also implicit  in a work of Balla \cite{Balla21} (see Lemmas 3.13 and 3.14), while the third inequality was first obtained in \cite{Balla21}. This third inequality also follows from a recent result of Ihringer \cite{Ihringer} after a bit of work. We present essentially this latter argument. 

\section{Proof of the main theorem}

Let $G$ be a $d$-regular graph on $n$ vertices with adjacency matrix $A$, $1\leq d\leq (1/2-\varepsilon)n$, and let the eigenvalues of $A$ be $d=\lambda_1\geq \dots\geq \lambda_n$. Let $K$ be the largest index such that $\lambda_K\geq 0$. 
Since $G$ is not a complete graph its independence number is at least $2$, so by interlacing $\lambda_2\geq 0$ and $K\geq 2$. First, we prove the first two inequalities of Theorem \ref{thm:main} by considering the sum, quadratic sum, and cubic sum of eigenvalues.

The next equalities and inequalities follow by expressing $\mbox{tr}(A)=0$, $\mbox{tr}(A^2)=dn$, and $\mbox{tr}(A^3)\geq 0$ with the help  of the eigenvalues.

\begin{equation}\label{equ:sum1}
     d+n\lambda_2\geq d+\sum_{i=2}^{K} \lambda_i =\sum_{i=K+1}^{n} |\lambda_i|,
\end{equation}

\begin{equation}\label{equ:sum2}
 dn-d^2-n\lambda_2^2\leq dn-d^2-\sum_{i=2}^{K} \lambda_i^2 = \sum_{i=K+1}^{n} \lambda_i^2,
\end{equation}

\begin{equation}\label{equ:sum3}
    d^3+n\lambda_2^3\geq d^3+\sum_{i=2}^{K} \lambda_i^3 \geq  \sum_{i=K+1}^{n} |\lambda_i|^3.
\end{equation}
We may assume that $\lambda_2\leq \sqrt{d/2}$, otherwise we are done. This ensures that the left-hand-side of (\ref{equ:sum2}) is nonnegative. Let us use the simple inequality that for any $x_1,\dots,x_m\geq 0$, we have
\begin{equation}
    \left(\sum_{i=1}^{m} x_i\right)\cdot \left(\sum_{i=1}^{m}x_i^3\right)\geq \left(\sum_{i=1}^{m}x_i^2\right)^2.
\end{equation}
Then, we get from (\ref{equ:sum1},\ref{equ:sum2},\ref{equ:sum3}) that
$$(d+n\lambda_2)(d^3+n\lambda_2^3)\geq (dn-d^2-n\lambda_2^2)^2.$$
After expanding and simplifying, we arrive to the inequality
$$\lambda_2 ((d-\lambda_2)^2+2n\lambda_2)\geq d(n-2d).$$
Here, the left-hand-side is at most $\lambda_2 (d^2+2n\lambda_2)\leq 2\lambda_2\max\{d^2,2n\lambda_2\}$. By considering which term takes the maximum in the previous inequality, we get that 
$$\lambda_2\geq \min\left\{\frac{n}{2d}-1,\frac{\sqrt{d}}{2}\cdot \sqrt{1-2d/n}\right\}.$$
This proves the first two inequalities in Theorem \ref{thm:main}. 

\bigskip

Now let us turn to the third inequality, which we prepare with the following lemma. We may assume that $\lambda_2 \leq \sqrt{d}/2$, otherwise we are done.

	\begin{lemma}\label{lemma:product}
		$(1+\lambda_2)\cdot |\lambda_n|\geq \frac{d}{4}.$
	\end{lemma}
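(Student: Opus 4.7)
The plan is to introduce the auxiliary matrix $P := (A - \lambda_n I)(\lambda_2 I - A)$ and exploit its positivity on $\mathbf{1}^\perp$. Since $P$ is a polynomial in $A$, it is symmetric with the same eigenvectors as $A$, and on the eigenvector for $\lambda_i$ its eigenvalue is $(\lambda_i - \lambda_n)(\lambda_2 - \lambda_i)$. This is $\le 0$ only on the Perron eigenvector $\mathbf{1}$ (where it equals $(d - \lambda_n)(\lambda_2 - d)$), while for every $i \ge 2$ we have $\lambda_n \le \lambda_i \le \lambda_2$ and the eigenvalue is $\ge 0$. Hence $P$ is PSD when restricted to $\mathbf{1}^\perp$. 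Expanding gives $P = (\lambda_2 + \lambda_n) A - A^2 - \lambda_2 \lambda_n I$, from which $P_{vv} = -d - \lambda_2 \lambda_n$ for every $v$ and $P_{vw} = (\lambda_2 + \lambda_n) A_{vw} - (A^2)_{vw}$ for $v \ne w$.

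Next, I would test $P$ on the simplest vectors living in $\mathbf{1}^\perp$, namely $e_v - e_w$ for distinct vertices $v, w$. PSD-ness gives $(e_v - e_w)^T P (e_v - e_w) = 2(P_{vv} - P_{vw}) \ge 0$, i.e.\ $P_{vw} \le P_{vv}$. Plugging in the explicit formulas and splitting by adjacency produces the common-neighborhood inequalities: if $v \sim w$ then $|N(v) \cap N(w)| \ge d - 1 + (1 + \lambda_2)(1 + \lambda_n)$, and if $v \not\sim w$ then $|N(v) \cap N(w)| \ge d + \lambda_2 \lambda_n$.

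The final step is to aggregate these pointwise estimates. Summing over all ordered pairs $v \ne w$ and comparing with the identity $\sum_{v \ne w} (A^2)_{vw} = dn(d-1)$ (of which $dn$ pairs are adjacent and $n(n-1-d)$ are not) simplifies to the single scalar inequality
$$d(n - d + \lambda_2 + \lambda_n) + (n-1)\lambda_2 \lambda_n \le 0,$$
which rearranges to $|\lambda_n| \ge \frac{d(n - d + \lambda_2)}{d + (n-1)\lambda_2}$. Multiplying through by $1 + \lambda_2$, the desired bound $(1+\lambda_2)|\lambda_n| \ge d/4$ reduces to the elementary inequality $4(n - d + \lambda_2)(1 + \lambda_2) \ge d + (n-1)\lambda_2$. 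Under the standing hypothesis $d \le (1/2 - \varepsilon)n$ one has $4(n-d) \ge d$ and $4(n-d) \ge n-1$, so the inequality holds term by term.

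The main conceptual hurdle I anticipate is identifying the right auxiliary matrix and realizing that global PSD-ness is not needed: the PSD property on $\mathbf{1}^\perp$ alone already gives the full pairwise inequalities, since each $e_v - e_w$ lies in $\mathbf{1}^\perp$ for free. Once this is spotted, the remaining steps are purely combinatorial and algebraic, and no use of the hypothesis $\lambda_2 \le \sqrt{d}/2$ is needed for the lemma itself.
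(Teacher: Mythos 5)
Your proof is correct, and it follows a genuinely different path from the paper's.

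The paper's argument is a short moment computation: assuming $\lambda_2\le\sqrt{d}/2$, it lower-bounds $\sum_{i>K}\lambda_i^2$ by $nd/4$ via $\operatorname{tr}(A^2)=dn$, then applies $\sum_{i>K}\lambda_i^2\le|\lambda_n|\sum_{i>K}|\lambda_i|$ and uses $\operatorname{tr}(A)=0$ to bound the last sum by $d+n\lambda_2\le n(1+\lambda_2)$. Your route instead introduces the Hoffman-type polynomial $P=(A-\lambda_n I)(\lambda_2 I-A)$, observes that its restriction to $\mathbf 1^\perp$ is PSD, and aggregates the pointwise inequalities $P_{vw}\le P_{vv}$. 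That aggregate is in fact equivalent to the single scalar fact $\sum_{i\ge 2}(\lambda_i-\lambda_n)(\lambda_2-\lambda_i)\ge 0$, which after expanding with $\operatorname{tr}(A)=0$ and $\operatorname{tr}(A^2)=dn$ gives exactly your inequality $d(n-d+\lambda_2+\lambda_n)+(n-1)\lambda_2\lambda_n\le 0$; the detour through $e_v-e_w$ and common neighborhoods is correct but not strictly needed. Two things your approach buys: (i) it dispenses entirely with the standing reduction $\lambda_2\le\sqrt{d}/2$, using only $\lambda_2\ge 0$ and $d\le n/2$; and (ii) the intermediate bound $|\lambda_n|\ge d(n-d+\lambda_2)/(d+(n-1)\lambda_2)$ is sharper than the lemma's stated $d/4$ in much of the parameter range. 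There is also a pleasant thematic connection to the rest of the paper: the proof of the third inequality also builds a PSD matrix from $\lambda_2 I-A$ (plus a $J$ correction), so your choice of $P$ is in the same spirit, just with $(A-\lambda_n I)$ as the second factor in place of the Schur product.

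Your final verification is also fine: $4(n-d+\lambda_2)(1+\lambda_2)\ge d+(n-1)\lambda_2$ follows from $4(n-d)\ge d$ and $4(n-d)\ge n-1$ together with $\lambda_2\ge 0$, both of which hold comfortably under $d\le n/2$.
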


	\begin{proof}		
		Using that $\lambda_2 \leq \sqrt{d}/2$, we have $\sum_{i=2}^{K}\lambda_i^2\leq \frac{nd}{4}$, so 
		$$\frac{nd}{4}\leq \sum_{i=K+1}^{n}\lambda_i^2\leq |\lambda_n|\sum_{i=K+1}^{n}|\lambda_i|=|\lambda_n|\left(d+\sum_{i=2}^{K}\lambda_i\right)\leq |\lambda_n|(d+n\lambda_2) \leq |\lambda_n|n(1+\lambda_2).$$
		Here, the first inequality holds by (\ref{equ:sum2}) (since $d \leq n/2$), and the first equality by (\ref{equ:sum1}).
		
	\end{proof}

Set $p = \frac{d}{n}$, and define the matrix $M$ as 
\[ M = (1 - \alpha) I - \beta A  + \alpha J,\]
where $\alpha = \frac{p}{\lambda_{2} + p}$, $\beta = \frac{1}{\lambda_{2} + p}$, $I$ is the identity matrix, and $J$ is the all-1 matrix. Then the eigenvalues $\mu_{1}, \dots, \mu_{n}$ of $M$ are
\[\mu_{1} = 1 - \alpha - d\beta + n\alpha = 1-\alpha \geq 0 \] 
and
\[\mu_{i} = 1 - \alpha - \beta \lambda_{i} = \frac{\lambda_{2} - \lambda_{i}}{\lambda_{2} + p} \geq 0\]
for $i=2,\dots,n$. Hence $M$ is positive semi-definite, and thus the Hadamard product (also known as entrywise product) $M \circ M$ is also positive semi-definite by the Schur product theorem. By simple calculations, the Hadamard product $M \circ M$ is equal to
\[M \circ M = (1 - \alpha^2)I + (\beta^2 - 2\alpha \beta)A  + \alpha^2 J, \]
and hence its eigenvalues $\eta_{1}, \dots, \eta_{n}$ are
\[\eta_{1} = (1 - \alpha^2) + d\beta(\beta - 2\alpha) + n\alpha^2 = 1 - \alpha^2 + n\alpha(\beta - \alpha) = 1+ \frac{p(1-p)n - p^2}{(\lambda_2 + p)^2}\]
and 
\[\eta_{i} = (1- \alpha^2) + (\beta^2 - 2\alpha \beta)\lambda_{i} = 1 + \frac{(1-2p)\lambda_{i} - p^2}{(\lambda_2 + p)^2}\] 
for $i= 2,\dots, n$. Using that $M\circ M$ is positive semi-definite, we have $\eta_i\geq 0$ for $i\in [n]$. Furthermore, as $d < \left( \frac{1}{2} - \varepsilon \right) n$, the inequality $1-2p>2\varepsilon>0$ also holds. Therefore, $\eta_{n} \geq 0$  implies that $(\lambda_2 + p)^2 \geq (1-2p) \vert \lambda_{n} \vert + p^2 \geq 2\varepsilon \vert \lambda_{n} \vert $. Multiplying both sides of the previous inequality by $(1+\lambda_2)$ and using the fact that $\lambda_2+p\leq \lambda_2+1$, we obtain 
$$(1+\lambda_2)^3\geq2\varepsilon |\lambda_n|(1+\lambda_2)\geq  \frac{\varepsilon d}{2}.$$
Here the last inequality follows by Lemma \ref{lemma:product}. This gives the third inequality in Theorem \ref{thm:main}.


\begin{thebibliography}{99}


\bibitem{Balla21}
I. Balla.
\emph{Equiangular lines via matrix projection.}
preprint, arXiv:2110.15842 (2021).

\bibitem{Ihringer}
F. Ihringer.
\emph{Approximately Strongly Regular Graphs.}
Discrete Math. 346 (3) (2023): 113299.

\bibitem{alon-boppana}
A. Nilli.
\emph{On the second eigenvalue of a graph.}
Discrete Mathematics, 91 (2) (1991): 207--210.

\bibitem{RST}
E. Räty, B. Sudakov, I. Tomon.
\emph{Positive discrepancy, MaxCut, and eigenvalues of graphs.}
preprint, arXiv:2311.02070.


\end{thebibliography}
\end{document}